\newif\ifnotesw\noteswtrue
\date{}
\newtheorem{conj}{Conjecture}
\newtheorem{cor}{Corollary}
\newtheorem{thm}{Theorem}
\newtheorem{lem}{Lemma}
\newenvironment{proof}{\paragraph{Proof:}}{\hfill$\square$}
\title{On a Conjecture of Thomassen}
\author{Michelle Delcourt\thanks{Research supported by NSF Graduate Research Fellowship DGE 1144245.}\\
\small Department of Mathematics\\[-0.8ex]
\small University of Illinois\\[-0.8ex]
\small Urbana, Illinois 61801, U.S.A.\\
\small\tt delcour2@illinois.edu\\
\and
Asaf Ferber\\
\small Department of Mathematics \\[-0.8ex]
\small Yale University, and \\
\small Department of Mathematics, \\[-0.8ex]
\small MIT. \\
\small \tt asaf.ferber@yale.edu, and ferbera@mit.edu.
}
\date{\dateline{Oct 17, 2014}{XX}\\
\small Mathematics Subject Classifications: 05C20, 05C40 }
\begin{document}
\maketitle

\begin{abstract}
\noindent In 1989, Thomassen asked whether there is an
integer-valued function $f(k)$ such that every $f(k)$-connected
graph admits a spanning, bipartite $k$-connected subgraph. In this
paper we take a first, humble approach, showing the
conjecture is true up to a $\log n$ factor.
\end{abstract}
\section{Introduction}
Erd\H{o}s noticed \cite{T88} that any graph $G$ with minimum degree
$\delta(G)$ at least $2k-1$ contains a spanning, bipartite subgraph
$H$ with $\delta(H)\geq k$.  The proof for this fact is obtained by
taking a maximal edge-cut, a partition of $V(G)$ into two sets $A$ and $B$, such that
the number of edges with one endpoint in $A$ and one in $B$, denoted
$|E(A,B)|$, is maximal. Observe that if some vertex $v$ in $A$ does not
have degree at least $k$ in $G[B]$, then by moving $v$ to $B$, one
would increase $|E(A,B)|$, contrary to maximality.  The same argument
holds for vertices in $B$.  In fact this
proves that for each vertex $v\in V(G)$, by taking such a subgraph $H$, the
degree of $v$ in $H$, denoted $d_H(v)$, is at least $d_G(v)/2$. This
will be used throughout the paper.

Thomassen observed that the same proof shows the following stronger
statement. Given a graph $G$ which is at least $(2k-1)$
\emph{edge-connected} (that is one must remove at least $2k-1$
edges in order to disconnect the graph), then $G$ contains a
bipartite subgraph $H$ for which $H$ is $k$ edge-connected.  In fact,
each edge-cut keeps at least half of its edges. This observation
led Thomassen to conjecture that a similar phenomena also holds for
\emph{vertex-connectivity}.

Before proceeding to the statement of Thomassen's conjecture, we remind the reader that a
graph $G$ is said to be $k$ \emph{vertex-connected} or
$k$-\emph{connected} if one must remove at least $k$ vertices
from $V(G)$ in order to disconnect the graph (or to remain with one
single vertex). We also let $\kappa(G)$ denote the minimum integer
$k$ for which $G$ is $k$-connected. Roughly speaking, Thomassen
conjectured that any graph with high enough connectivity also should
contain a $k$-connected spanning, bipartite subgraph. The following appears
as Conjecture 7 in \cite{T89}.
\begin{conj}\label{conj7}
For all $k$, there exists a function $f(k)$ such that for all graphs $G$, if $\kappa(G) \geq f(k)$,
then there exists a spanning, bipartite $H \subseteq G$ such that $\kappa(H) \geq k$.
\end{conj}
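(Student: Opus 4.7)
The aim is to produce a function $f(k)$ depending only on $k$ such that every $f(k)$-connected $G$ contains a spanning bipartite $k$-connected subgraph $H$. My plan is probabilistic with a structural twist: bipartition $V(G)$ by a (possibly biased or conditional) random $2$-coloring, and let $H$ be the bichromatic subgraph. By Menger's theorem, $H$ is $k$-connected iff for every $u\neq v$ and every $S\subseteq V(G)\setminus\{u,v\}$ with $|S|<k$, at least one $u$-$v$ path of $G-S$ survives entirely in $H$.

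The first two steps form the standard probabilistic setup. Color each vertex red or blue uniformly and independently. A Chernoff bound gives $\delta(H)\ge f(k)/3$ with probability $1-\exp(-\Omega(f(k)))$, which handles the easy degree half of connectivity. For a fixed triple $(u,v,S)$ with $|S|<k$, Menger's theorem applied to $G-S$ produces $f(k)-k$ internally disjoint $u$-$v$ paths $P_1,\dots,P_{f(k)-k}$; a single path of length $\ell$ survives the random coloring with probability exactly $2^{-(\ell-1)}$. To close a union bound over the $O(n^{k+1})$ triples $(u,v,S)$ one needs many of the $P_i$ to survive simultaneously, and if these paths may be arbitrarily long one is forced to take $f(k)=\Omega(k\log n)$, which is exactly the loss in the paper's humble result.

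The critical step is breaking that length dependence through a \emph{short-linkage lemma}: in any $f(k)$-connected graph, between any two vertices and avoiding any separator of size less than $k$, one can find $\Omega(f(k))$ internally disjoint paths each of length bounded by some function $g(k)$ of $k$ alone. Given such a linkage, each short path survives the random bipartition with probability at least $2^{-g(k)}$, and the probability that none of $\Omega(f(k))$ disjoint short paths survives falls to $\exp(-\Omega(f(k)\cdot 2^{-g(k)}))$; choosing $f(k)=C\cdot k\cdot 2^{g(k)}$ with $C$ large defeats the $n^{k+1}$ union bound. The main obstacle is precisely this short-linkage lemma: highly connected graphs may have enormous diameter (cycle powers, long prisms, bounded-degree expanders strung along a path), so a naive length bound is false in general. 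My intended workaround is to first extract from $G$ a bounded-diameter $k$-linked ``hub'' by iterated contraction of high-degree neighborhoods, in the spirit of Mader's and Thomassen's subdivision theorems, and then condition the random coloring so that typical $u,v$ linkages are routed through the hub and thereby shortened. Proving the existence of such a hub, and showing that it survives a random or conditional bipartition while preserving enough internally disjoint short routings, is the essential step separating the unconditional $f(k)$-only bound from the $\log n$-loss theorem that the paper proves.
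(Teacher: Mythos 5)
The statement you are trying to prove is Thomassen's Conjecture~\ref{conj7}, which this paper does \emph{not} prove: the paper only establishes the weaker Theorem~\ref{thm:graph}, in which $f$ is allowed to depend on $n$ through a $\log n$ factor. Your proposal does not close that gap either, and you essentially say so yourself: everything hinges on the ``short-linkage lemma'' asserting that in an $f(k)$-connected graph any two vertices are joined, avoiding any separator of size less than $k$, by $\Omega(f(k))$ internally disjoint paths of length at most some $g(k)$. That lemma is false. Take $G$ to be the $m$-th power of a path on $n$ vertices with $m=f(k)$: this graph is $m$-connected, but its two endpoints are at distance roughly $n/m$, so \emph{every} path between them has length growing with $n$, and no bound $g(k)$ depending on $k$ alone can exist. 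Consequently each candidate path survives the random bipartition with probability only $2^{-\Omega(n/m)}$, and your union bound over the $O(n^{k+1})$ triples $(u,v,S)$ collapses unless $f$ is allowed to grow with $n$ --- which is precisely the regime of the paper's Theorem~\ref{thm:graph}, obtained there by a different, non-probabilistic-union-bound route (maximal $k$-connected bipartite pieces, a matching/K\"onig argument, and the merging Lemmas~\ref{lem:main} and~\ref{merge few into on}).

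The proposed workaround is not a proof step but a hope: you neither construct the ``bounded-diameter $k$-linked hub'' nor explain how ``conditioning the random coloring'' would be done without destroying the independence needed for the Chernoff and union-bound estimates elsewhere in the argument. Moreover, in the path-power example above no hub can help: any bounded-diameter subgraph occupies an interval of bounded length in the path order, so vertices far from it cannot reach it by short paths at all, and routing linkages ``through the hub'' cannot shorten a $u$--$v$ connection whose every realization in $G$ is long. So the essential difficulty of the conjecture --- controlling vertex cuts without any handle on path lengths or diameter --- remains untouched, and your argument, as written, proves at best a statement of the same strength as the paper's $\log n$-loss theorem, not Conjecture~\ref{conj7}.
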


In this paper we prove that Conjecture \ref{conj7} holds up to a $\log n$
factor by showing the following:
\begin{thm}\label{thm:graph}
For all $k$ and $n$, and for every graph $G$ on $n$ vertices the
following holds. If $\kappa(G) \geq 10^{10}k^3 \log n$, then there
exists a spanning, bipartite subgraph $H \subseteq G$ such that
$\kappa(H) \geq k$.
\end{thm}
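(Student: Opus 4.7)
My plan is to choose a uniformly random bipartition $(A, B)$ of $V(G)$, let $H$ be the resulting spanning bipartite subgraph---the edges of $G$ with one endpoint in $A$ and one in $B$---and show that $\kappa(H) \geq k$ with positive probability. Since $\kappa(H) \geq k$ is equivalent to $H - S$ being connected for every $S \subseteq V(G)$ with $|S| = k-1$, a union bound over the $\binom{n}{k-1} \leq n^{k-1}$ choices of $S$ reduces the problem to proving, for each fixed such $S$, that $\Pr[H - S \text{ is disconnected}] \leq n^{-k}$.

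For a fixed $S$, the graph $G - S$ is at least $(\kappa(G) - k + 1)$-edge-connected, so for every partition $V \setminus S = C \sqcup \bar C$ into nonempty parts there are at least $10^{10} k^3 \log n - k$ edges of $G$ between $C$ and $\bar C$. Viewing $(A, B)$ as a uniformly random coloring $\sigma : V \to \{0,1\}$, a direct calculation shows
\[
   \Pr\bigl[E_H(C, \bar C) = \emptyset\bigr] \;=\; 2^{-r(C)},
\]
where $r(C)$ is the rank (vertices minus connected components) of the auxiliary bipartite graph on $V \setminus S$ with edge set $E_G(C, \bar C)$: the event is exactly that $\sigma$ is monochromatic on every component of this graph. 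Exploiting that $N_G(C) \setminus C$ is a separator in $G$ (unless it equals $V \setminus C$), one checks that $r(C) \geq |N_G(C) \cap \bar C| \geq \kappa(G) - k + 1$ for every nontrivial $C$, so each individual partition is bad with probability at most $n^{-\Omega(k^3)}$.

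The main obstacle is that there are up to $2^{n-k+1}$ partitions $(C, \bar C)$, so the naive union bound over partitions is hopelessly wasteful. The heart of the argument will be to restrict attention to a polynomial-sized family of ``canonical'' witnesses to disconnection---most naturally, the smallest connected component of $H - S$ in any bad configuration---and to exploit the structural constraints forced by the high connectivity of $G$: each vertex $v$ in such a component $C$ with $|C|$ small must have almost all of its $\kappa(G)$ neighbors in $\bar C$, which provides many ``witnesses'' against a bad bipartition and severely rigidifies $C$. The $k^3 \log n$ factor in the theorem reflects the cost of balancing this structural enumeration against the per-witness probability estimate; I would expect one factor of $k$ to come from the choice of $S$, one from a concentration step in the probability bound, and one from the structural enumeration of components.
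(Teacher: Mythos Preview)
Your approach has a fatal flaw: a uniformly random bipartition can fail to produce even a connected bipartite subgraph, let alone a $k$-connected one, regardless of how large $\kappa(G)$ is. Take $G = K_{m,m}$ with parts $X$ and $Y$, so $\kappa(G) = m$. Under a random bipartition $V = A \cup B$, an edge $xy$ with $x \in X$, $y \in Y$ survives in $H$ exactly when $x$ and $y$ land on opposite sides of the cut, so
\[
H \;=\; K_{X\cap A,\,Y\cap B} \;\sqcup\; K_{X\cap B,\,Y\cap A},
\]
a disjoint union of two complete bipartite graphs on disjoint vertex sets. Hence $H$ is disconnected whenever all four sets $X\cap A$, $X\cap B$, $Y\cap A$, $Y\cap B$ are nonempty, which occurs with probability $1 - O(2^{-m})$. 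Thus $\Pr[H\text{ is disconnected}] \to 1$, and no enumeration of ``canonical witnesses'' $C$, polynomial-sized or otherwise, can rescue the union bound: the target event has probability close to $1$, not close to $0$. Your per-cut estimate $\Pr[E_H(C,\bar C)=\emptyset]=2^{-r(C)}$ is correct, but for this $G$ every relevant cut has $r(C)=2m-2$ while there are $\Theta(2^{2m})$ of them, and no sub-family can do better. The same obstruction arises whenever $G$ is itself bipartite---a case in which the theorem is trivial (take $H=G$) but your random cut almost surely destroys connectivity.

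The paper's proof is structural rather than a first-moment argument over vertex colorings. It iteratively extracts \emph{maximal} bipartite $k$-connected subgraphs $H_1,\dots,H_t$ (Mader's theorem guarantees they exist), and the only randomness is one coin flip per piece, deciding which side of each $H_i$ is labelled ``left'' and which ``right''; crucially this keeps every $H_i$ intact as a bipartite $k$-connected block. The $\log n$ factor comes not from concentration over random vertex colorings but from a deterministic halving lemma on an auxiliary digraph whose vertices are the pieces $H_i$ together with the leftover vertices: repeatedly deleting a separator of size $<k$ and passing to the smaller side can happen at most $\log n$ times, yielding a $k$-connected sub-digraph, which is then lifted back (via a merging lemma) to a bipartite $k$-connected subgraph of $G$ strictly larger than some $H_i$, contradicting maximality. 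The moral is that the bipartition must respect the bipartite structure already present in $G$, which a uniform random cut does not.
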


Because of the $\log n$ factor, we did not try to optimize the
dependency on $k$ in Theorem \ref{thm:graph}.  However, it looks
like our proof could be modified to give slightly better bounds.


\section{Preliminary Tools}
In this section, we introduce a number of preliminary results.

\subsection{Mader's Theorem}

The first tool is the following useful theorem due to Mader \cite{Mad}.

\begin{thm}\label{mader}
Every graph of average degree at least $4\ell$ has an
$\ell$-connected subgraph.
\end{thm}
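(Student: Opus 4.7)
The plan is to prove Mader's theorem by a single extremal-subgraph argument rather than by direct induction. A naive induction that deletes a vertex of degree at most $2\ell-1$ runs smoothly whenever such a vertex exists, but it gets stuck in the opposite regime, where every vertex of $G$ already has moderately large degree yet $G$ itself fails to be $\ell$-connected; splitting $G$ along a small vertex cut does not cleanly preserve the average-degree hypothesis on either side, so the recursion stalls.

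To sidestep this, I would define a family $\mathcal{H}$ of sufficiently dense subgraphs of $G$ by the inequality $|E(H)| > (2\ell-1)(|V(H)|-\ell+1)$, together with a minor size constraint $|V(H)| \geq 2\ell-1$ to prevent degenerate single-vertex ``solutions'' when $\ell$ is large, and then pick $H \in \mathcal{H}$ minimizing $|V(H)|$. A direct arithmetic check shows that $G$ itself satisfies the inequality given $|E(G)| \geq 2\ell \cdot |V(G)|$, so $\mathcal{H}$ is nonempty. I would then claim that this extremal $H$ is itself $\ell$-connected.

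The claim splits into two steps. First, the minimality of $|V(H)|$ forces $\delta(H) \geq 2\ell-1$: if some $v \in V(H)$ had $d_H(v) \leq 2\ell-2$, a one-line calculation confirms that $H - v$ still lies in $\mathcal{H}$, contradicting the choice of $H$. Second, assume for contradiction that $H$ admits a vertex cut $S$ of size at most $\ell-1$, with $V(H) \setminus S = A \sqcup B$ and no edges between $A$ and $B$. The min-degree bound forces each of $|A \cup S|$ and $|B \cup S|$ to be at least $2\ell$, so both induced subgraphs $H_1 := H[A \cup S]$ and $H_2 := H[B \cup S]$ are valid but strictly smaller candidates; by minimality, neither can lie in $\mathcal{H}$. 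Adding the two resulting reversed edge-count inequalities and using $|E(H_1)| + |E(H_2)| \geq |E(H)|$ (the only doubly counted edges sit inside $S$) collapses, after dividing by $2\ell-1$, into $|S| \geq \ell$ -- the desired contradiction.

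The main obstacle is calibrating the extremal invariant. The specific threshold $(2\ell-1)(|V(H)|-\ell+1)$ is tuned so that the min-degree step loses exactly one unit of slack per deletion (matching the $2\ell-1$ leading coefficient) while the cut step exploits the $-\ell+1$ offset to turn a hypothetical cut of size $\leq \ell-1$ into a strict numerical contradiction. Cruder extremal choices, such as maximizing $|E(H)|/|V(H)|$, supply enough slack for one of the two steps but not both; balancing the two is the whole content of the argument.
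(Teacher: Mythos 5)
The paper does not prove this statement at all: it is quoted as Mader's theorem and cited to \cite{Mad}, so there is no in-paper argument to compare against. Your proposal is a correct, self-contained proof, and it is essentially the classical argument (Mader's original, reproduced in Diestel's textbook with the invariant $\|H\|>\gamma(|H|-k)$, $\gamma=\varepsilon(G)$); you have merely specialized the threshold to $(2\ell-1)(|V(H)|-\ell+1)$, which, as your arithmetic shows, still makes both steps close: vertex deletion costs at most $2\ell-2<2\ell-1$ edges, and the cut step gives $|V(H_1)|+|V(H_2)|=|V(H)|+|S|$, hence $|S|>\ell-1$ after dividing by $2\ell-1$. The one detail you leave implicit is the boundary of the size constraint in the minimum-degree step: deleting $v$ from a minimal $H$ must not drop $|V(H-v)|$ below $2\ell-1$. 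This does check out, because no graph on exactly $2\ell-1$ vertices can belong to $\mathcal{H}$ (it would need more than $\ell(2\ell-1)$ edges, while $\binom{2\ell-1}{2}=(\ell-1)(2\ell-1)$), so any member of $\mathcal{H}$, in particular the minimal one, has at least $2\ell$ vertices; it would be worth stating this explicitly, as the standard write-up does. Nonemptiness of $\mathcal{H}$ and the final deduction of $\ell$-connectedness (minimum degree $\geq 2\ell-1$ forces $|V(H)|\geq 2\ell>\ell$, and the cut argument covers $S=\emptyset$, i.e.\ disconnectedness) are all in order.
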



Because we are interested in finding bipartite subgraphs with high connectivity,
the following corollary will be helpful.

\begin{cor}\label{cor:maderBipartite}
Every graph $G$ with average degree at least $8\ell$ contains a (not
necessarily spanning) bipartite subgraph $H$ which is at least
$\ell$-connected.
\end{cor}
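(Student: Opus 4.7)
The plan is to combine the maximum edge-cut observation from Erdős (recalled in the introduction) with Mader's theorem stated just above. The point is that "bipartite" is preserved when passing to a subgraph, so if we first extract a dense bipartite subgraph on the full vertex set and then cut down to a highly connected piece, the resulting piece is automatically bipartite.

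First, I would apply the maximum-cut argument to $G$: let $A \cup B = V(G)$ be a partition maximizing $|E(A,B)|$, and let $G'$ be the spanning bipartite subgraph of $G$ consisting only of the edges across this cut. A standard double-counting shows $|E(G')| \geq |E(G)|/2$, since a uniformly random bipartition already keeps each edge with probability $1/2$ (alternatively, each vertex has at least half of its $G$-degree in $G'$, as recalled in the introduction). Consequently, the average degree of $G'$ on $n := |V(G)|$ vertices satisfies
\[
\bar d(G') \;=\; \frac{2|E(G')|}{n} \;\geq\; \frac{|E(G)|}{n} \;=\; \frac{\bar d(G)}{2} \;\geq\; 4\ell.
\]

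Second, I would apply Mader's theorem (Theorem~\ref{mader}) to $G'$: since $\bar d(G') \geq 4\ell$, there is a subgraph $H \subseteq G'$ that is $\ell$-connected. Because $H$ is a subgraph of the bipartite graph $G'$, the graph $H$ is itself bipartite. This $H$ is the desired (not necessarily spanning) bipartite $\ell$-connected subgraph of $G$.

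There is essentially no obstacle here: the only content is in Mader's theorem, which we are allowed to cite. The corollary is just the observation that the max-cut step costs a factor of $2$ in average degree and preserves every other property we need, which is why the hypothesis is $8\ell$ rather than the $4\ell$ in Mader's statement.
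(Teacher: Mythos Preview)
Your proof is correct and follows essentially the same approach as the paper: take a maximum bipartition to obtain a spanning bipartite subgraph $G'$ with at least half the edges (hence average degree $\geq 4\ell$), then apply Mader's theorem to $G'$ to extract an $\ell$-connected subgraph, which is automatically bipartite.
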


\begin{proof}
  Let $G$ be such a graph and let $V(G)=A\cup B$ be a partition of
  $V(G)$ such that $|E(A,B)|$ is maximal. Observe that $|E(A,B)|\geq
  |E(G)|/2$, and therefore, the bipartite graph $G'$ with parts
  $A$ and $B$ has average degree at least $4\ell$. Now, by applying
  Theorem \ref{mader} to $G'$ we obtain the desired subgraph $H$.
\end{proof}

\subsection{Merging $k$-connected Graphs}
We will also make use of the following easy expansion lemma.

\begin{lem}
  \label{lemma:joining two k connected graphs}
  Let $H_1$ and $H_2$ be two vertex-disjoint graphs, each of which is $k$-connected. Let $H$ be a
  graph obtained by adding $k$ independent edges between these two graphs. Then, $\kappa(H)\geq
  k$.
\end{lem}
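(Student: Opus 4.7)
The plan is a straightforward contradiction argument: suppose $S \subseteq V(H)$ is a separator of $H$ with $|S|<k$, and derive a contradiction by exhibiting a path between any two components of $H-S$.

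First I would split $S = S_1 \cup S_2$ where $S_i = S \cap V(H_i)$. Since $|S_i| \leq |S| < k$ and each $H_i$ is $k$-connected, each $H_i - S_i$ is a (nonempty) connected graph. So $H - S$ decomposes, as an abstract vertex set, into at most two ``pieces'' (one coming from each $H_i$), plus whatever crossing matching edges survive.

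The key observation is a simple counting argument on the matching. Let $e_1,\dots,e_k$ be the $k$ independent edges between $H_1$ and $H_2$, with endpoints $u_i \in V(H_1)$ and $v_i \in V(H_2)$. Because the edges form a matching, the $u_i$'s are distinct and the $v_i$'s are distinct. An edge $e_i$ is destroyed in $H - S$ only if $u_i \in S_1$ or $v_i \in S_2$. Thus the number of destroyed edges is at most $|S_1| + |S_2| = |S| < k$, so at least one edge $e_i$ survives with both endpoints in $V(H) \setminus S$. This surviving edge joins the connected graph $H_1 - S_1$ to the connected graph $H_2 - S_2$, so $H - S$ is connected, contradicting the assumption that $S$ is a separator.

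There is essentially no obstacle here; the only subtle point is making sure the matching condition is used correctly so that each vertex of $S$ destroys at most one of the $k$ crossing edges. Once that is in hand, the counting gives a surviving edge immediately, and $k$-connectedness of $H_1$ and $H_2$ finishes the argument.
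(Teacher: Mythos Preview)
Your argument is correct and follows essentially the same approach as the paper: one observes that deleting fewer than $k$ vertices cannot kill all $k$ independent crossing edges, and that each $H_i$ remains connected after such a deletion, so $H-S$ is connected. The paper states these two observations tersely and leaves the rest to the reader; you have simply written out the details.
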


\begin{proof}
Note first that by construction, one cannot remove all the edges between $H_1$ and
$H_2$ by deleting fewer than $k$ vertices. Moreover, because $H_1$
and $H_2$ are both $k$-connected, each will remain connected
after deleting less than $k$ vertices. From here, the proof follows easily.
\end{proof}

Next we will show how to merge a collection of a few $k$-connected
components and single vertices into one $k$-connected component.
Before stating the next lemma formally, we will need to introduce
some notation. Let $G_1,\ldots, G_t$ be $t$ vertex-disjoint
$k$-connected graphs, let $U=\{u_{t+1},\ldots,u_{t+s}\}$ be a set
consisting of $s$ vertices which are disjoint to $V(G_i)$ for $1
\leq i \leq t$, and let $R$ be a $k$-connected graph on the vertex
set $\{1,\ldots,t+s\}$. Also let $X = (G_1, \ldots G_t, u_{t+1},
\ldots, u_{t+s} )$ be a $(t+s)$-tuple and $X_i$ denote the $i$th
element of $X$. Finally, let
$\mathcal F_R:=\mathcal F_R(X)$ denote the family
consisting of all graphs $G$ which satisfy the following:

\begin{enumerate}[$(i)$]
\item the disjoint union of the elements of $X$ is a spanning subgraph of $G$, and
\item for every distinct $i, j \in V(R)$ if $ij\in E(R)$, then there exists an
edge in $G$ between $X_i$ and $X_j$, and
\item for every $1 \leq i\leq t$, there is a set of $k$ independent edges between $V(G_i)$ and $k$ distinct vertex sets $\{ V(X_{j_1}), \ldots, V(X_{j_k}) \}$, where $V(u_i)=\{u_i\}$.
\end{enumerate}

\begin{lem}\label{merge few into on}
Let $G_1,\ldots, G_t$ be $t$ vertex-disjoint graphs, each of which
is $k$-connected, and let $U=\{u_{t+1},\ldots,u_{t+s}\}$ be a set of
$s$ vertices for which $U\cap V(G_i)=\emptyset$ for every $1 \leq
i\leq t$. Let $R$ be a $k$-connected graph on the vertex-set
$\{1,\ldots,t+s\}$, and let $X = \{G_1, \ldots G_t, u_{t+1}, \ldots,
u_{t+s} \}$. Then, any graph $G\in \mathcal F_R(X)$ is
$k$-connected.
\end{lem}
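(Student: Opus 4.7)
The plan is to verify $k$-connectedness of $G$ directly via Menger's theorem, by constructing $k$ internally vertex-disjoint paths between any two distinct $x,y\in V(G)$. If $x,y$ both lie in a single $V(G_i)$ with $i\le t$, the $k$-connectedness of $G_i$ supplies them at once, so assume $x\in V(X_p)$ and $y\in V(X_q)$ with $p\ne q$; this is the main case.

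The key idea is to lift $k$ internally disjoint $p$-$q$ paths from an augmented reduced graph up to $G$, using condition (iii) to supply $k$ distinct ``exit'' vertices out of $G_p$ (when $p\le t$) and $k$ distinct ``entry'' vertices into $G_q$ (when $q\le t$). Let $R^+$ be the graph on $V(R)$ obtained from $R$ by adding, for each $1\le i\le t$, the edges $iY_1^i,\ldots,iY_k^i$, where $Y_1^i,\ldots,Y_k^i$ are the $k$ distinct super-nodes of (iii) at $G_i$, attached through the distinct vertices $v_1^i,\ldots,v_k^i\in V(G_i)$ by the $k$ independent edges $v_\ell^i w_\ell^i$. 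Since $R\subseteq R^+$, the graph $R^+$ is still $k$-connected, and by construction every $i\le t$ has $Y_1^i,\ldots,Y_k^i$ among its $R^+$-neighbors. Every edge of $R^+$ now comes with a canonical realising $G$-edge: edges of $R$ via condition (ii), and added edges via the corresponding (iii)-edge.

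Next I would invoke the following standard consequence of Menger's theorem, the \emph{first-edge fan}: in a $k$-connected graph $H$, for any vertex $v$ with $k$ prescribed distinct neighbours $w_1,\ldots,w_k$ and any other vertex $u$, there exist $k$ internally disjoint $v$-$u$ paths whose first edges are $v w_1,\ldots,v w_k$, one per path. (Proof: adjoin an auxiliary vertex $v^\star$ adjacent exactly to $w_1,\ldots,w_k$; the enlarged graph is still $k$-connected, so Menger supplies $k$ internally disjoint $v^\star$-$u$ paths, which must pass through the $w_\ell$'s as their second vertex, one per path.) Applying this auxiliary-vertex construction simultaneously at both $p$ and $q$ (with prescribed attachments $\{Y_\ell^p\}_\ell$ at $p$ when $p\le t$, and $\{Y_\ell^q\}_\ell$ at $q$ when $q\le t$) produces $k$ internally disjoint $p$-$q$ paths $P_1,\ldots,P_k$ in $R^+$ whose first edge at $p$ is $pY_\ell^p$ and whose last edge at $q$ is $Y^q_{\sigma(\ell)}\,q$ for some permutation $\sigma$; if $p>t$ or $q>t$ the corresponding fan step is simply omitted since $X_p$ or $X_q$ is a single vertex.

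Finally I would lift each $P_\ell=(p=a_0,a_1,\ldots,a_{m_\ell}=q)$ to a path $\widetilde P_\ell$ in $G$ as follows: when $p\le t$, use the Fan Lemma inside the $k$-connected graph $G_p$ with source $x$ and target set $\{v_1^p,\ldots,v_k^p\}$ to produce $k$ internally disjoint paths from $x$ to these distinct attachment vertices, with path $\ell$ ending at $v_\ell^p$; then cross the (iii)-edge $v_\ell^p w_\ell^p$ into $V(X_{a_1})$. For each intermediate super-node $a_r$ of $P_\ell$ --- which, by internal disjointness of the $P_\ell$ in $R^+$, is visited only by this one path --- route from the incoming entry vertex to the outgoing exit vertex through $G_{a_r}$ (connected since $k$-connected) or trivially through $u_{a_r}$ if $a_r>t$, using for each transition $a_r\to a_{r+1}$ the canonical $G$-edge realising that $R^+$-edge. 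Finally, when $q\le t$, cross the (iii)-edge at $q$ into $v^q_{\sigma(\ell)}$ and use the Fan Lemma inside $G_q$ to reach $y$. Internal vertex-disjointness of $\widetilde P_1,\ldots,\widetilde P_k$ then follows from: the fans inside $G_p$ and $G_q$ are internally disjoint by construction; the independence of the (iii)-edges guarantees that the $v_\ell^p,w_\ell^p,v^q_{\sigma(\ell)},w^q_{\sigma(\ell)}$ are all pairwise distinct; and the intermediate super-nodes of the $P_\ell$'s are disjoint across $\ell$, so the lifted interior portions use pairwise disjoint vertex sets.

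The main obstacle is the bookkeeping for degenerate cases: when an attachment super-node coincides with the other endpoint (say $Y_\ell^p=q$, so $P_\ell$ has length one and no interior to route through), when $x$ itself equals some $v_\ell^p$ (making one of the interior fan-paths trivial), when an entry vertex into $G_q$ supplied by a shortcut collides with an entry vertex supplied by the (iii)-fan at $q$, and the matching of ``first-end'' and ``second-end'' attachments via the permutation $\sigma$. Each such situation is handled by a routine local adjustment --- a trivial subpath or a direct routing --- but keeping the lifted paths internally vertex-disjoint in all these degenerate configurations is the only genuinely delicate part of the argument.
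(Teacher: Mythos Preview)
Your approach is genuinely different from the paper's. The paper verifies $k$-connectedness directly by taking an arbitrary set $S\subseteq V(G)$ with $|S|\le k-1$ and showing $G\setminus S$ is connected: at most $k-1$ elements of $X$ meet $S$, so deleting the corresponding vertices of $R$ leaves $R$ connected, which links all untouched super-nodes; and for a touched $G_i$, property~(iii) supplies $k$ independent edges to $k$ distinct super-nodes, so at least one survives $S$ on both ends and lets $G_i\setminus S$ reach an untouched component. No Menger, no path-lifting.

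Your Menger-based construction, however, has a real gap: the \emph{first-edge fan} lemma you state is false. Take $k=2$ and let $H$ have vertex set $\{v,w_1,w_2,c,u\}$ and edge set $\{vw_1, vw_2, vc, vu, w_1w_2, w_1c, w_2c, cu\}$. Then $H$ is $2$-connected and $w_1,w_2$ are neighbours of $v$, yet there are no two internally disjoint $v$--$u$ paths with first edges $vw_1,vw_2$: any $w_i$--$u$ path avoiding $v$ must pass through $c$ (the only other neighbour of $u$), so two such paths collide at $c$. Your auxiliary-vertex sketch breaks exactly here: adding $v^\star$ adjacent only to $w_1,w_2$ does yield a $2$-connected graph, and Menger does give two internally disjoint $v^\star$--$u$ paths, for instance $v^\star w_1 c u$ and $v^\star w_2 v u$ --- but the second passes through $v$, so replacing $v^\star$ by $v$ fails to produce a simple path starting $vw_2$.

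This is not just bookkeeping. The prescribed first and last edges are precisely what you use to force the exit vertices $v_\ell^p$ from $G_p$ (and the entry vertices into $G_q$) to be pairwise distinct, which in turn is what lets you apply the Fan Lemma inside $G_p$ and $G_q$ and keep the lifted paths internally disjoint. Without that control the argument does not go through as written, whereas the paper's cut-based argument sidesteps the issue entirely.
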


\begin{proof}
Let $G\in \mathcal F_R(X)$, and let $S\subseteq V(G)$ be a subset of
size at most $k-1$. We wish to show that the graph $G':=G\setminus
S$ is still connected. Let $x,y\in V(G')$ be two distinct
vertices in $G'$; we show that there exists a path in $G'$
connecting $x$ to $y$. Towards this end, we first note that if both $x$
and $y$ are in the same $G_i$, then because each $G_i$ is
$k$-connected, there is nothing to prove. Moreover, if both
$x$ and $y$ are in distinct elements of $X$ which are also disjoint from $S$, then
we are also finished, as follows. Because $R$ is $k$-connected,
if we delete all of the vertices in $R$ corresponding to elements of $X$ which
intersect $S$, the resulting graph is still connected.
Therefore, one can easily find a path between the elements containing $x$
and $y$ which goes only through ``untouched'' elements of $X$, and hence,
there exists a path connecting $x$ and $y$.

The remaining case to deal with is when $x$ and $y$ are in different
elements of $X$, and at least one of them is not disjoint with $S$. Assume
$x$ is in some such $X_i$ ($y$ will be treated similarly). Using
Property $(iii)$ of $\mathcal F_R$, there is at least one edge
between $X_i$ and an untouched $X_j$.  Therefore one can find a path between $x$ and some vertex $x'$ in an
untouched $X_j$. This takes us back to the previous case.
\end{proof}

\subsection{Main Technical Lemma}

A \emph{directed graph} or \emph{digraph} is a set of vertices and a
collection of directed edges; note that bidirectional edges are
allowed. For a directed graph $D$ and a vertex $v\in V(D)$ we let
$d^+_D(v)$ denote the out-degree of $v$. We let $U(D)$ denote the
\emph{underlying graph of} $D$, that is the graph obtained by
ignoring the directions in $D$ and merging multiple edges.
In order to find the desired spanning, bipartite
$k$-connected subgraph in Theorem \ref{thm:graph}, we look at
sub-digraphs in an auxiliary digraph.

The following is our main
technical lemma and is the main reason why we have a $\log n$
factor.

\begin{lem}\label{lem:main}
If $D$ is a finite digraph on at most $n$ vertices with minimum out-degree
$$\delta^{+}(D) > (k-1) \left\lceil \log n\right\rceil,$$
then there exists a sub-digraph $D' \subseteq D$ such that
\begin{enumerate}
\item For all $v \in V(D')$ we have $d_{D'}^{+}(v) \geq d_{D}^{+}(v) - (k-1) \left\lceil \log n\right\rceil$, and
\item $\kappa(U(D')) \geq k$.
\end{enumerate}
\end{lem}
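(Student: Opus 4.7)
The plan is to construct a nested sequence $D = D_0 \supseteq D_1 \supseteq D_2 \supseteq \cdots$ of induced sub-digraphs by iteratively cutting along small vertex separators of the underlying graph, and to take $D'$ to be the first $D_i$ whose underlying graph is $k$-connected. Formally: at step $i$, if $U(D_i)$ is already $k$-connected we set $D' := D_i$ and stop; otherwise we pick a vertex set $S_i \subseteq V(D_i)$ with $|S_i| \leq k-1$ whose removal disconnects $U(D_i)$, let $A_i$ be the vertex set of a smallest component of $U(D_i) \setminus S_i$, and set $D_{i+1} := D_i[A_i]$.

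Two invariants drive the analysis. The first is geometric shrinkage: since $A_i$ is a smallest component, $|V(D_{i+1})| = |A_i| \leq |V(D_i)|/2$, so $|V(D_i)| \leq n/2^i$. The second is a linear decrease in out-degree. For any $v \in A_i$, the separator $S_i$ isolates $A_i$ from every other component of $U(D_i) \setminus S_i$, so $v$ has no edges in $D_i$ (in either direction) to those components; hence every out-edge of $v$ in $D_i$ must land in $A_i \cup S_i$, giving
$$d^{+}_{D_{i+1}}(v) \;\geq\; d^{+}_{D_i}(v) - |S_i| \;\geq\; d^{+}_{D_i}(v) - (k-1).$$
Telescoping along the chain, $d^{+}_{D'}(v) \geq d^{+}_{D}(v) - T(k-1)$ for every $v \in V(D')$, where $T$ is the number of iterations performed.

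The crux is bounding $T$ by $\lceil \log n \rceil$. Combining the trivial inequality $|V(D_i)| \geq \delta^{+}(D_i) + 1$ with the out-degree invariant above and the hypothesis $\delta^{+}(D) > (k-1)\lceil \log n \rceil$ yields
$$|V(D_i)| \;>\; (k-1)\bigl(\lceil \log n \rceil - i\bigr) + 1.$$
For every $i \leq \lceil \log n \rceil - 1$ this already forces $|V(D_i)| \geq k+1$, which is exactly the room needed for a non-$k$-connected $U(D_i)$ to admit a separating cut of size at most $k-1$ with two non-empty sides; otherwise the algorithm could get stuck on a complete graph on at most $k$ vertices, which is not $k$-connected yet has no disconnecting cut at all. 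Conversely, if the process had not terminated by step $\lceil \log n \rceil$, then at that step we would simultaneously need $|V(D_{\lceil \log n \rceil})| \leq n/2^{\lceil \log n \rceil} \leq 1$ and $|V(D_{\lceil \log n \rceil})| \geq 2$, a contradiction. Thus $T \leq \lceil \log n \rceil$, giving property $(1)$, while property $(2)$ holds by the choice of termination.

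The main obstacle is exactly this termination argument: a priori, the halving process could dead-end in a tiny clique that the algorithm cannot subdivide further. The hypothesis $\delta^{+}(D) > (k-1)\lceil \log n \rceil$ is calibrated precisely so that the arithmetic floor on $|V(D_i)|$ rules out that degeneracy at every step before termination, and this same calibration is what forces the $\log n$ factor appearing in Theorem \ref{thm:graph}.
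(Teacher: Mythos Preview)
Your proof is correct and follows the same iterated-halving strategy as the paper: repeatedly remove a separator of size at most $k-1$, pass to the smallest component, and observe that out-degrees drop by at most $k-1$ per step while the vertex count halves. The paper's write-up is terser and simply notes that after $\lceil \log n\rceil$ steps one would be left with a single vertex still carrying an out-edge; you have filled in the one point the paper leaves implicit, namely that the process cannot stall on a complete graph with at most $k$ vertices, by tracking the lower bound $|V(D_i)| > (k-1)(\lceil \log n\rceil - i)+1$.
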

\begin{proof}
If $\kappa(U(D))\geq k$, then there clearly is nothing to prove. So we may assume that
$\kappa(U(D))\leq k-1$.  Delete a separating set of size at most
$k-1$. The smallest component, say $C_1$, has size at most $n/2$ and
for any $v \in V(C_1)$, every out-neighbor of $v$ is either in $V(C_1)$ or in the
separating set that we removed, and so
$$d_{C_1}^{+}(v) \geq d_{D}^{+}(v) - (k-1).$$

We continue by repeatedly applying this step, and note that this process
must terminate. Otherwise, after at most $\log n$ steps we are left
with a component which consists of one single vertex and yet
contains at least one edge, a contradiction.
\end{proof}

%

\section{Highly Connected Graphs}
With the preliminaries out of the way, we are now ready to prove
Theorem \ref{thm:graph}.
\begin{proof}
Let $G$ be a finite graph on $n$ vertices with $$\kappa(G) \geq
10^{10}k^3 \log n.$$  In order to find the desired subgraph, we first
initiate $G_1:=G$ and start the following process.

As long as $G_i$
contains a bipartite subgraph which is at least $k$-connected on at
least $10^3 k^2 \log n$ vertices, let $H_{i}=(S_i\cup T_i,E_i)$ be
such a subgraph of maximum size, and let $G_{i+1}:=G_i\setminus
V(H_i)$.  Note that $H_1$ must exist as
$$\delta(G_1)\geq 10^{10}k^3\log n-2k\geq 8000k^2\log n,$$
and so by Corollary \ref{cor:maderBipartite},
$G_1$ must contain a $k$-connected bipartite subgraph of size at least $10^3k^2\log n$.

Let $H_1, \ldots, H_t$ be the sequence obtained in this manner, and
note that all the $H_i$'s are vertex disjoint with $\kappa(H_i) \geq
k$ and $|V(H_i)| \geq 10^3 k^2 \log n$. Observe that if $H_1$ is
spanning, then there is nothing to prove. Therefore, suppose for a
contradiction that $H_1$ is not spanning. Let $V_0:=V(G_{t+1}) =
\left\{v_1, \ldots, v_s \right\}$ be the
 subset of $V(G)$ remaining after this process; note that it might be the case that $V_0=\emptyset$.
 Because each $H_i$ is a bipartite, $k$-connected
subgraph of $G_i$ of  maximum size and $G$ is $10^{10}k^3 \log n$ connected,
we show that the following are true:
\begin{enumerate}[$(a)$]
\item For every $1 \leq i<j \leq t$, there are less than $4k$ independent edges between
$H_i$ and $H_j$, and
\item for every $j>i$ and $v\in V(G_j)$, the number of edges in $G$ between $v$ and $H_i$, denoted by $d_G(v,V(H_i))$, is less than
$2k$, and
\item for every $1\leq i\leq t$, there exists a set $M_i$ consisting of exactly $10^3 k^2 \log n$ independent edges, each of which has exactly one endpoint in
$H_i$.
\end{enumerate}

Indeed, for showing $(a)$, note that if there are at least $4k$
independent edges between $H_i$ to $H_j$, by pigeonhole principle,
at least $k$ of them are between the same part of $H_i$ (say $S_i$)
and the same part of $H_j$ (say $S_j$).  Therefore, the graph
obtained by joining $H_i$ to $H_j$ with this set of at least $k$
edges is a $k$-connected (by Lemma \ref{lemma:joining two k
connected graphs}), bipartite graph and is larger than $H_i$,
contrary to the maximality of $H_i$.

For showing $(b)$, note that if there are at least $2k$ between $v$
and $H_i$ then there are at least $k$ edges incident with $v$ touch
the same part of $H_i$, and let $F$ be a set of $k$ such edges.
Second, we mention that joining a vertex of degree at least $k$ to a
$k$-connected graph trivially yields a $k$-connected graph. Next,
since all the edges in $F$ are touching the same part, the graph
obtained by adding $v$ to $V(H_i)$ and $F$ to $E(H_i)$, will also be
bipartite. This contradicts the maximality of $H_i$.

For $(c)$, note first that since $H_1$ is not spanning, using $(b)$
we conclude that in the construction of the bipartite subgraphs
$H_1, \ldots, H_t$ in the process above,
$$\delta(G_2)\geq 10^{10}k^3\log n-2k\geq 8000k^2\log n.$$
Therefore, using Corollary \ref{cor:maderBipartite}, it follows that
$G_2$ contains a bipartite subgraph of size at least $10^3k^2\log n$
which is also $k$-connected.

Therefore, the process does not
terminate at this point, and $H_2$ exists (that is, $t\geq 2$). It also
follows that for each $1\leq i\leq t$ we have $|V(G)\setminus
V(H_i)|\geq 10^3k^2\log n$. Next, note that $G$ is $10^{10}k^3 \log
n$ connected, and that each $H_i$ is of size at least $10^3 k^2 \log
n$. For each $i$, consider the bipartite graph with parts $V(H_i)$ and
$V(G)\setminus V(H_i)$ and with the edge-set consisting of all the
edges of $G$ which touch both of these parts. Using K\"onig's
Theorem (see \cite{West}, p. 112), it follows that if there is no
such $M_i$ of size $10^3 k^2 \log n$, then there exists a set of
strictly fewer than $10^3 k^2 \log n$ vertices that touch all the edges in this
bipartite graph (a vertex cover).  By deleting these vertices,
one can separate what is left from $H_i$ and
its complement, contrary to the fact that $G$ is $10^{10}k^3 \log n$
connected.

In order to complete the proof, we wish to reach a contradiction by showing that one can either
merge few members of $\{H_1, \ldots, H_t\}$ with vertices of $V_0$ into a $k$-connected
component or find a $k$-connected component of size at least
$10^3 k^2 \log n$ which is contained in $V_0$. In order to do so,
we define an auxiliary digraph, using a special subgraph $G'\subseteq G$,
and use Lemmas \ref{lem:main} and \ref{merge few into on} to achieve the desired contradiction. We first
describe how to find $G'$.

First, we partition $V_0$ into two sets,
say $A$ and $B$, where
$$A = \left\{v \in V_0: d_G\left(v, \bigcup_{i=1}^t
V(H_i)\right)\geq 10^4 k^3 \log n \right\},$$ and observe that,
using $(b)$, since $A\subseteq V_0$, any vertex $a \in A$ must send
edges to more than
$$10^4 k^3 \log n/(2k)=5000k^2\log n$$ distinct elements in
$X:=\{H_1,\ldots, H_t,v_1,\ldots,v_s\}$. For each $1 \leq i \leq t$,
let $M_i$ be a set as described in $(c)$. Observe that, using $(b)$,
each such $M_i$ touches more than
$$10^3k^2\log n/(4k)=250k\log n$$ distinct elements of $X\backslash\left\{H_i \right\}$.
Let $M'_i\subseteq M_i$ be a subset of size exactly $250k\log n$
such that each pair of edges in $M'_i$ touches two distinct elements
of $X\backslash\left\{H_i \right\}$, which of course are distinct from $G_i$.
Recall that $H_i=(S_i\cup T_i,E_i)$ for every $1 \leq i \leq t$.

For $Y:=\{S_1,\ldots, S_t, T_1,\ldots,T_t,v_1,\ldots,v_s\}$,
let $$\Phi:Y\rightarrow \{L,R\}$$ be a mapping, generated according to the following random process:

Let $X_1,\ldots,X_t,Y_1,\ldots,Y_s \sim \text{Bernoulli}(1/2)$ be
mutually independent random variables. For each $1\leq i\leq t$, if
$X_i=1$, then let $\Phi(S_i)=L$ and $\Phi(T_i)=R$. Otherwise, let
$\Phi(S_i)=R$ and $\Phi(T_i)=L$. For every $1\leq j\leq s$, if
$Y_j=1$, then let $\Phi(v_j)=L$, and otherwise $\Phi(v_j)=R$. Now,
delete all of the edges between two distinct elements of $Y$ which receive
the same label according to $\Phi$.

Finally, define $G'$ as the
spanning bipartite graph of $G$ obtained by deleting all of the edges
within $A$ and for distinct $i$ and $j$, the edges between $H_i$ and $H_j$ which are not contained in $M'_i\cup M_j'$.  

Recall by construction, using $\Phi$ we generated labels at random;
therefore, by using Chernoff bounds (for instance see \cite{AS}), one can easily check that
with high probability the following hold:
\begin{enumerate}[$(i)$]
\item For every $1\leq i\leq t$, each set $M'_i\cap E(G')$ touches at least
(say) $120 k \log n$ other elements of $X$, and
\item for each $b\in B$, the degree of $b$ into $A\cup B$ is at
least (say) $d_{G'}(b,A\cup B)\geq 10^5k^3\log n$, and
\item for each vertex $a\in A$, there exist edges between $a$ and $\cup_{i=1}^t V(H_i)$ that touch at least (say) $2000k^2\log n$
distinct members of $\{H_1, \ldots, H_t\}$.
\end{enumerate}

Note that here we relied on the luxury of losing the $\log n$ factor for using
Chernoff bounds, but it seems like we could easily handle this ``cleaning
process'' completely by hand.

Now we are ready to define our auxiliary digraph $D$. To this end,
we first orient edges (again, bidirectional edges are allowed, and
un-oriented edges are considered as bidirectional) of $G'$ in the
following way:

For every
$1\leq i\leq t$, we orient all of the edges in $E(G')\cap M'_i$ out of
$H_i$. We orient all of the edges between $A$ and $\cup_{i=1}^t V(H_i)$
out of $A$. We orient edges between $B$ and $\cup_{i=1}^t V(H_i)$
arbitrarily, and we orient the remaining edges within $A\cup B$ in
both directions.

Now, we define $D$ to be the
digraph with vertex set $V(D)=X$, and $\overrightarrow{xy}\in E(D)$ if and only
if there exists an edge between $x$ and $y$ in $G'$ which is
oriented from $x$ to $y$.

In order to complete the proof, we first note that with high probability $D$ is a digraph on at most
$n$ vertices with out-degree $\delta^+(D)>(k-1)\lceil\log n\rceil$.
This follows
immediately from Properties $(i)$-$(iii)$ as well as the way we oriented
the edges. Therefore, one can apply Lemma \ref{lem:main} to find a
sub-digraph $D'\subseteq D$ such that
\begin{enumerate}
\item For all $v \in V(D')$ we have $d_{D'}^{+}(v) \geq d_{D}^{+}(v) - (k-1) \left\lceil \log n\right\rceil$, and
\item $\kappa(U(D')) \geq k$.
\end{enumerate}

In fact, with high probability, $\delta^+(D)\geq 120k \log n \geq k + (k-1) \left\lceil \log n\right\rceil.$
Note that by construction, every pair of edges which are oriented
out of some $H_i$ must be independent and go to different components.
Using Property $1.$ above combined with the fact that $\delta^+(D') \geq \delta^+(D) - (k-1)
\left\lceil \log n\right\rceil\geq k$, we may conclude that the subgraph
$G''\subseteq G'$ induced by the union of all the components in
$V(D')$ satisfies $G''\in \mathcal F_{U(D')}(V(D'))$. Applying
Lemma \ref{merge few into on} with $X = V(D')$ and $R = U(D')$,
it follows that $\kappa(G'')\geq k$.

In order to obtain the desired
contradiction, we consider the following two cases:

\textbf{Case 1:} $V(G'')$ contains $V(H_i)$ for some $i$. We note
that this case is actually impossible because it would contradict
the maximality of $H_i$ for the minimal index $i$ such that $V(H_i)\subseteq
V(G'')$.

\textbf{Case 2:} $V(G'')\subseteq A\cup B$. We note that in this
case, there must be at least one vertex $b\in B\cap V(G'')$. Indeed,
$G''$ is $k$-connected, and there are no edges within $A$. Now, it
follows from Properties $1.$\ and $(ii)$ above that
$$d_{D'}^+(b)\geq d_{D}^+(b)-(k-1)\lceil \log n\rceil \geq
10^4k^3\log n.$$ Thus, it follows that $|V(G'')|\geq 10^4k^3\log n$.
Combining this observation with the facts that $G''$ is
$k$-connected and $V(G'')\subseteq A\cup B$, we obtain a
contradiction.  This case can not arise because $G''$ should have
been included as one of the bipartite subgraphs $\{H_1, \ldots,
H_t\}$.

This completes the proof.
\end{proof}

{\bf Acknowledgments.} The authors would like to thank the anonymous
referees for valuable comments. The second author would also like to
thank Andrzej Grzesik, Hong Liu and Cory Palmer for fruitful
discussions in a previous attempt to attack this problem.


\end{document}